\title[Potentiels variables]{Potentiels variables et equations dispersives}
\date{Séminaire Laurent Schwartz 2013}
\author[M. Beceanu]{Marius Beceanu}
\newtheorem{theorem}{Théoreme}
\newtheorem{lemma}[theorem]{Lemma}
\newtheorem{observation}[theorem]{Remarque}
\newtheorem{definition}{Définition}
\newcommand{\set}{\mathbb}
\newcommand{\dl}{\nabla}
\renewcommand{\frak}{\mathfrak}
\renewcommand{\upsilon}{R}
\newcommand{\mc}{\mathcal}
\newcommand{\be}{\begin{equation}}
\newcommand{\ee}{\end{equation}}
\newcommand{\bm}{\begin{multline}}
\newcommand{\emm}{\end{multline}}
\newcommand{\ba}{\begin{array}}
\newcommand{\ds}{\displaystyle}
\newcommand{\ea}{\end{array}}
\newcommand{\lb}{\label}
\newcommand{\bpm}{\begin{pmatrix}}
\newcommand{\epm}{\end{pmatrix}}
\newcommand{\dd}{{\, d}}
\newcommand{\les}{\lesssim}
\newcommand{\R}{\set R}
\newcommand{\Z}{\set Z}
\newcommand{\B}{\mc B}
\DeclareMathOperator*{\slim}{s-lim}
\DeclareMathOperator{\ISO}{O}
\begin{document}
\maketitle

\section{Introduction}
Soit l'équation linéaire de Schr\"{o}dinger en $\set R^d$, $d \geq 3$, avec un potentiel réel $V$ dépendant du temps:
\begin{equation}\lb{schroedinger}
i\partial_t \psi(x, t) - \Delta \psi(x, t) + V(x, t) \psi(x, t) = \Psi(x, t),\ \psi(0)=\psi_0.
\end{equation}
Si le potentiel est indépendant du temps, l'équation suit plusieurs lois de conservation, en particulier celles de la masse et de l'énergie:
$$\begin{aligned}
M[\psi] &:= \int_{\R^d} |\psi(x, t)|^2 \dd x\\
E[\psi](t) &:= \int_{\R^d} |\dl \psi(x, t)|^2 + V(x, t) |\psi(x, t)|^2 \dd x.
\end{aligned}
$$
En permettant un potentiel dépendant du temps, la masse est toujours conservée, parce que l'évolution reste unitaire, mais l'énergie peut varier comme fonction du temps et n'est a priori plus bornée.

Si le potentiel est indépendant du temps, la solution a des propriétés dispersives, comme l'inégalité de Strichartz \cite{tao} et la régularité locale (Kato smoothing):
$$\begin{aligned}
\|\psi\|_{L^\infty_t L^2_x} + \|P_c \psi\|_{L^2_t L^{2d/(d-2)}_x} &\les \|\psi_0\|_2\ \text{(Strichartz)}\\
\|D^{1/2} P_c \psi\|_{L^2_t L^2_x(Q)} &\les |Q|^{1/2d} \|\psi_0\|_2\ \text{(régularité locale)}.
\end{aligned}$$
Par ailleurs, on connaît en ce cas-ci que les solutions décroissent à une rate de $t^{-3/2}$ de façon uniforme:
$$
\|P_c \psi(t)\|_{L^\infty_x} \les \|\psi_0\|_{L^1_x}\ \text{(décroissance ponctuelle)}.
$$
Ces inégalités ne tiennent que pour composante dispersive de la solution, déterminée par la projection $P_c$ sur le spectre continu de $-\Delta+V$. En outre, elles ne tiennent que sans états stationnaires d'énergie zéro.

En trois dimensions, l'interaction des états stationnaires d'énergie zéro et du spectre continu génère des termes particuliers qui décroisent à une rate de $t^{-1/2}$, qui doivent être pris en compte d'une manière particulière.

Tout change en principe quand le potentiel est variable. Le but de ce travail-ci est d'énoncer, par la suite, des conditions suffisantes pour garantir les mêmes résultats dans ce cas général.

\section{Résultat principal}
\begin{theorem}
Dans l'équation (\ref{schroedinger}), soit $d=3$ et supposons que\\
i) La famille $\{V(t) : t \geq 0\}$ est precompacte en $\widehat{\dot B^{1/2}_{2, 1}}$ jusqu'aux symmetries, c'est-à-dire isometries et changement d'échelle.\\
ii) Les fonctions $\ds \frac{|x|^{1-\epsilon}}{|x\cdot \omega|^{1-\epsilon}} V(t)$ sont uniformément bornées en $\widehat{\dot B^{1/2}_{2, 1}}$ pour tous $\omega \in S^2$.\\
iii) Pour tout $t \geq 0$, $-\Delta+V(t)$ n'a pas de résonances ou d'états stationnaires d'énergie zéro.\\
iv) $\partial_t V(x, t) \in L^p_t |x|^{\frac 2 p - 2} \widehat{\dot B^{1/2}_{2, 1}}_x$ pour $p \in [1, 2)$ donné.\\
Alors on a que
$$
\|P_c(t) \psi(t)\|_{L^2_t L^{6, 2}_x} \les \|\psi_0\|_{L^2_x} + \|\Psi\|_{L^2_t L^{6/5, 2}_x}.
$$
\end{theorem}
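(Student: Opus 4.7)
My plan is to reduce, by freezing the potential in time and applying Duhamel's formula, to the stationary case, where hypotheses (i)--(iii) are precisely those under which the Wiener-algebra theory for $-\Delta+V$ developed by the author in earlier works (with $\widehat{\dot B^{1/2}_{2,1}}$ as the natural Wiener algebra for three-dimensional scattering) produces the endpoint dispersive estimates; hypothesis (iv) then controls the error made by the freezing.

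First, for a fixed $\tau\ge 0$, I would exploit the intertwining identity $e^{-it(-\Delta+V(\tau))}P_c(\tau)=W_\pm(\tau)\,e^{-it\Delta}\,W_\pm(\tau)^*$ with $W_\pm(\tau)=\slim_{t\to\pm\infty}e^{it(-\Delta+V(\tau))}e^{-it\Delta}$. Under (i)--(iii), $W_\pm(\tau)$ and its adjoint extend boundedly to $L^2_x$, $L^{6,2}_x$, and $L^{6/5,2}_x$, so the Lorentz-refined Keel--Tao endpoint Strichartz estimate for $e^{it\Delta}$ transfers to
$$
\|e^{-it(-\Delta+V(\tau))}P_c(\tau)f\|_{L^2_tL^{6,2}_x}\les\|f\|_{L^2_x},
$$
with the dual inhomogeneous version following by duality and the Christ--Kiselev lemma. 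Precompactness modulo symmetries (i) makes the implicit constant uniform in $\tau$, because on such a precompact subset of $\widehat{\dot B^{1/2}_{2,1}}$ the bound depends continuously on the potential.

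Next, I would fix a small $\delta>0$, partition $[0,\infty)=\bigcup_k I_k$ with $|I_k|=\delta$, and on each $I_k=[t_k,t_{k+1}]$ rewrite the equation as $i\partial_t\psi=(-\Delta+V(t_k))\psi+(V(t)-V(t_k))\psi+\Psi$. Applying the frozen estimate above with $\tau=t_k$ to $P_c(t_k)\psi$ gives
$$
\|P_c(t_k)\psi\|_{L^2_{I_k}L^{6,2}_x}\les\|\psi(t_k)\|_{L^2}+\|(V(t)-V(t_k))\psi\|_{L^2_{I_k}L^{6/5,2}_x}+\|\Psi\|_{L^2_{I_k}L^{6/5,2}_x}.
$$
A H\"older--O'Neil inequality in Lorentz spaces bounds the middle term by $\sup_{t\in I_k}\|V(t)-V(t_k)\|_{L^{3/2,\infty}_x}\,\|\psi\|_{L^2_{I_k}L^{6,2}_x}$; by the fundamental theorem of calculus and (iv) (the embedding $\widehat{\dot B^{1/2}_{2,1}}\hookrightarrow L^{3/2,\infty}_x$ being consistent with the weight $|x|^{2/p-2}$), this supremum is $o(1)$ as $\delta\to0$, so the middle term is absorbed. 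To convert $P_c(t_k)\psi$ into $P_c(t)\psi$, I would use the Riesz projection $I-P_c(t)=\frac{1}{2\pi i}\oint_\gamma(-\Delta+V(t)-z)^{-1}\,dz$ around the bound-state eigenvalues, differentiate in $t$ via the resolvent identity, and deduce from (iii)--(iv) that $\|P_c(t)-P_c(t_k)\|_{L^{6,2}\to L^{6,2}}\to0$ on $I_k$. Summing the local bounds in $\ell^2_k$ and closing by a standard bootstrap on $\|\psi(t_k)\|_{L^2}$ using mass-conservation-type inequalities produces the global estimate.

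The main obstacle is preserving the endpoint Lorentz refinement $L^{6,2}/L^{6/5,2}$ throughout. At the stationary step it is what forces hypothesis (ii) into play (the directional Wiener bound tames the low-frequency singularity of the perturbed resolvent, ensuring that the wave operators remain bounded on $L^{6,2}$ and not merely on $L^6$); at the freezing step it requires the Lorentz version of O'Neil--H\"older combined with the weight of (iv); and at the patching step it demands a sharp resolvent-perturbation bound for $P_c$ in the Lorentz topology. Verifying that the constants produced by (i)--(iv) combine truly uniformly, coherently with the symmetry normalization, is where the bulk of the work lies.
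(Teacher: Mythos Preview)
Your freezing-and-patching scheme has a genuine gap at the ``sum in $\ell^2_k$'' step. On each interval $I_k$ the frozen Strichartz bound carries the term $\|\psi(t_k)\|_{L^2}$ on the right; mass conservation only says this equals $\|\psi_0\|_{L^2}$, so after squaring and summing you obtain $N\|\psi_0\|_{L^2}^2$ with $N$ the (infinite, or $\sim\delta^{-1}T$) number of intervals. No bootstrap closes this: the initial-data contribution on each piece is bounded \emph{below}, not above, by anything summable. A second problem is your use of (iv) when $p>1$: the weighted space $|x|^{2/p-2}\widehat{\dot B^{1/2}_{2,1}}$ does not embed into $L^{3/2,\infty}_x$ (the scalings differ), so $\sup_{t\in I_k}\|V(t)-V(t_k)\|_{L^{3/2,\infty}}$ is not controlled by the hypothesis, and the H\"older--O'Neil absorption step does not go through as written.

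The paper does not freeze; it conjugates globally by the time-dependent wave operator. Setting $\phi(t)=W_+(t)\psi(t)$ turns the equation into the \emph{free} Schr\"odinger equation with source $i(\partial_t W_+(t))W_+^*(t)\phi(t)+i(\partial_t W_+(t))P_p(t)\psi(t)$, so a single global Strichartz estimate suffices and no patching losses occur. The first source is handled not by a Lorentz H\"older trick but by a gain of \emph{directional} spatial decay in $\partial_t W_+$ (the identity part of $W_+$ disappears under $\partial_t$): the structure formula for wave operators yields a bound of order $|x\cdot\omega|^{-1+\epsilon}$, which is then paired with heterogeneous directional Strichartz estimates $L^{4/3}_t L^1_{\omega_1}L^2_{\omega_1^\perp}\to L^4_t L^\infty_{\omega_2}L^2_{\omega_2^\perp}$. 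Hypothesis (ii) is precisely the uniform directional condition that makes this $|x\cdot\omega|^{-1+\epsilon}$ gain survive the full inversion $(I+T)^{-1}\partial_t T(I+T)^{-1}$; it is not a low-frequency resolvent condition for $L^{6,2}$-boundedness of $W_\pm$ (that already follows from (i) and (iii)). The weight $|x|^{2/p-2}$ in (iv) is calibrated to this space--time exchange via the directional estimates, not to an $L^{3/2,\infty}$ embedding.
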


Ces estimations dispersives sont valables dans un régime non-perturbatif et ne sont pas réductibles (d'une manière habituelle) au cas d'un potentiel fixe. En particulier, $V(x, t)$ n'a pas forcément une limite quand $t \to \infty$.

Remarques:

1) L'espace de Banach $\widehat{\dot B^{1/2}_{2, 1}} \equiv |x|^{-1/2} \widehat{\dot B^0_{2, 1}}$ est défini par sa norme
$$
\|V\|_{\widehat{\dot B^{1/2}_{2, 1}}} := \sum_{k \in \Z} 2^{k/2} \|\chi_{[2^k, 2^{k+1})}(|x|) V(x)\|_{L^2_x}.
$$
C'est comparable à $\langle x \rangle^{-1/2-\epsilon} L^2$, mais $\widehat{\dot B^{1/2}_{2, 1}}$ est en plus invariant par rapport aux changements d'échelle:
$$
\|\alpha^2 V(\alpha \, \cdot)\|_{\widehat{\dot B^{1/2}_{2, 1}}} = \|V\|_{\widehat{\dot B^{1/2}_{2, 1}}}
$$
pour chaque $\alpha = 2^k$, $k \in \Z$.

2) $-\Delta+V(t)$ a un nombre constant de valeurs propres: celui-ci ne change que si un état stationnaire d'énergie negative passe par énergie zéro, ce que nos hypothèses interdisent.

3) Ajouter un petit potentiel variable ($L^\infty_t L^{3/2}_x$) à l'équation ne change pas le résultat, pourvu qu'on contrôle les états stationnaires $P_p(t) \psi(t)$.

4) Ce résultat s'améliore en dimension supérieure $d>3$, mais sous conditions supplémentaires sur $P_p(t) \psi(t)$. Si ces conditions sont accomplies, il devrait être possible de montrer des inégalités de Strichartz pour
$$
\partial_t V \in L^p_t L^p_t |x|^{\frac 2 p - 2} \widehat {\dot B^{\frac {d-2}{d-1}}_{\frac{d-1}{d-2}, 1}}
$$
avec $1 \leq p<\infty$ en dimension $d=4$ et même $1\leq p \leq \infty$ pour $d \geq 5$.

Si $p<\infty$ (ce qui est toujours le cas en $\R^3$ et probablement en $\R^4$), on gagne une petite constante en sous-divisant l'interval $[0, \infty)$ ; quand $p=\infty$ il faut prendre la dérivée $\partial_t V$ petite et non seulement finie.

5) De tels résultats devraient aussi être valables pour l'équation des ondes.

\section{Historique du problème}
Les résultats connus pour l'équation de Schr\"{o}dinger à potentiel variable sont plutôt de deux types, selon qu'il s'agit de variétés compactes ou non.

Sur les variétés compactes il n'y a pas de dispersion, donc on cherche la rate de croissance de l'énergie. Bourgain \cite[1999]{bou1}, \cite[1999]{bou2}, \cite[2003]{bou3} montra que, pour des potentiels lisses sur le tore, l'énergie croît au de façon au moins logarithmique, mais moins vite que $t^\epsilon$ pour tout $\epsilon>0$. Spencer pour des potentiels périodiques et Bourgain pour des potentiels quasi-périodiques montrèrent que la rate de croissance est tout au plus logarithmique.

En 2008, Wang \cite{wan} montra le même pour des potentiels analytiques, résultat de nouveau généralisé par Fang--Zhang \cite[2012]{fazh} au cas des potentiels de type Gevrey et aux dimensions supérieures.

Le cas d'autres variétés compactes reste pour la plupart ouvert ; Delort \cite[2010]{del} traite le cas sphérique et énonce une condition plus générale sur les valeurs propres du laplacien sur une variété qui permettrait de retrouver une borne logarithmique pour la rate de croissance.

Pour le cas d'une variété noncompacte, en espèce pour un potentiel périodique en temps sur $\R^3$, Galtbayar--Jensen--Yajima \cite[2002]{gjy} établirent une rate de décroissance de $t^{-3/2}$ pour le cas non-résonant et de $t^{-1/2}$ dans le cas résonant. Par la suite, en 2003 Rodnianski--Schlag \cite{rodsch} montrèrent une rate de décroissance ponctuelle de $t^{-3/2}$, pour de petits potentiels variables.

Goldberg \cite{gol}, encore pour des potentiels périodiques en temps, montra en 2009 l'inégalité de Strichatz sous une condition impliquant l'absence des états stationnaires d'énergie zéro.

Costin--Lebowitz--Tanveer \cite[2010]{clt}, pour un potentiel à longue portée sur $\set R^3$ avec une perturbation sinusoïdale périodique, montrèrent l'absence des états bornés de l'opérateur de Floquet, vérifiant ainsi l'hypothèse spectrale, et trouvèrent une rate ponctuelle de décroissance de $t^{-5/3}$.

Beceanu \cite[2011]{bec2} montra que, pour un potentiel à profil fixe évoluant sous l'action d'une famille d'isométries, l'inégalité de Strichartz reste vraie, si le paramètre de modulation a une dérivée en $L^1_t$ (dans l'absence des états d'énergie zéro). Beceanu--Soffer \cite[2011]{becsof1} assouplira cette condition en admettant que le paramètre de modulation soit en $\dot H^{1/2} \cap C$ ; on ajouta aussi les changements d'échelle aux isometries permissibles.

\section{Esquisse de la demonstration}
\subsection{Idée de la demonstration}
L'idée principale est que, moyennant les opérateurs d'onde, on peut éliminer entièrement le potentiel variable.

Les opérateurs d'onde $W_{\pm}(t)$ ont la propriété fondamentale de jumelage entre le hamiltonien perturbé et le hamiltonien libre:
$$
W_{\pm}(t) (-\Delta+V(t)) = -\Delta W_{\pm}(t).
$$

Appliquant $W_+(t)$ à l'équation et mettant $W_+(t) \psi(t) = \phi(t)$, on obtient l'équation transformée
\be\lb{eq_trans}
i\partial_t \phi(t) - \Delta \phi(t) = W_+(t)\Psi(t) + i \partial_t W_+(t) W_+^*(t) \phi(t) + i \partial_t W_+(t) P_p(t) \psi(t).
\ee

C'est plus facile de démontrer l'inégalité de Strichartz pour $\phi$, à condition qu'on sache contrôler les termes d'erreur résultant du potentiel variable.

Pour qu'on puisse aller et revenir entre les deux équations, il faut que les opérateurs d'onde soient uniformément bornés sur $L^p$, par example pour $6/5 \leq p \leq 6$. C'est ce que garantit, par exemple, la première condition de notre hypothèse --- que nous pourrions même remplacer en demandant le bornage des opérateurs d'onde.

Le dernier terme $i \partial_t W_+(t) P_p(t) \psi(t)$ en (\ref{eq_trans}) est contrôlé par la norme $L^2$ (conservée) ; il faut que $\partial_t W_+(t)$ soit en $L^2_t$ ou qu'une condition de modulation soit satisfaite.

Pour l'autre terme d'erreur $i \partial_t W_+(t) W_+^*(t)$, il faut améliorer la rate de décroissance en $x$, par example $L^3 \mapsto L^{3/2}$ donnerait $L^2_t$:
$$
L^{4/3}_t L^{3/2}_x \xrightarrow{Strichartz} L^4_t L^3_x \xrightarrow{\partial_t W_+(t) W_+^*(t)} L^{4/3}_t L^{3/2}_x.
$$
Idéalement, il faudrait deux puissances de décroissance en $x$ pour gagner une puissance de décroissance en $t$ (impossible en $\R^3$):
$$
L^2_t L^{6/5}_x \xrightarrow{Strichartz} L^2_t L^6_x \xrightarrow{\partial_t W_+(t) W_+^*(t)} L^2_t L^{6/5}_x.
$$
Sans décroissance, on a besoin de $L^1_t$, ce qui serait pareil aux résultats préexistants.

Notons que le gain en décroissance demeure possible pour $\partial_t W_+$ parce qu'en prenant la dérivée le terme principal (l'identité) devient zéro.

\subsection{Les opérateurs d'onde}
Ils sont définis par les formules
$$
W_{\pm} := \slim_{t \to \pm \infty} e^{it (-\Delta+V)} e^{it\Delta}
$$
$$
W^*_{\pm} = \slim_{t \to \pm \infty} e^{-it \Delta} e^{-it(-\Delta+V)} P_c
$$
La complétude asymptotique est une propriété fondamentale des opérateurs d'onde qui est satisfaite sous des conditions assez générales.
\begin{definition}
On dit que les opérateurs d'onde sont asymptotiquement complets si
\begin{enumerate}
\item[i] $W_{\pm}$ sont $L^2$-bornés et surjectifs de $L^2$ à $P_c L^2$.
\item[ii] Le spèctre singulier continu de $H$ est vide, $\sigma_{sc}(H) = \varnothing$.
\end{enumerate}
\end{definition}
Un résultat fondamental de Agmon en 1975 \cite{agmon}, appuyé sur des contributions de Kato et de Kuroda, dit que ceci est le cas si $V \in \langle x \rangle^{-1-\epsilon} L^{\infty}$.

En d'autres circonstances, on souhaite plutôt une condition de décroissance en moyenne sur $V$. Un tel résultat est impliqué de manière triviale par les inégalités de Strichartz si $V \in L^{3/2}$.

En outre, Ionescu--Schlag \cite[2006]{iosc} établirent la complétude asymptotique des opérateurs d'onde sous une variété beaucoup plus large de conditions, permettant aussi des potentiels magnétiques.

Dans cette situation, $W_\pm$ sont des isometries partielles entre $L^2$ et $P_c L^2$, même unitaires s'il n'y a pas d'états stationnaires:
$$
W_\pm^* W_\pm = I,\ W_\pm W_\pm^* = P_c.
$$
Par ailleurs, grâce aux propriétés de régularité elliptique, $W$ préservent jusqu'à deux dérivées sans conditions supplémentaires.

Pour d'autres éspaces $L^p$ si $p \ne 2$, le premier tel résultat appartient à Yajima \cite[1993]{yajima}, qui démontra que les opérateurs d'onde $W_\pm$ et leurs adjoints sont bornés sur $L^p$, $1 \leq p \leq \infty$, en dimension $d \geq 3$ dans l'absence des états stationnaires d'énergie zéro.

Ce résultat fut suivi par d'autres, pour la plupart toujours obtenus par Yajima et ses collaborateurs. En dimension $d=2$, en particulier, ils montrèrent que les opérateurs d'onde ne sont bornés en général que pour $1<p<\infty$.

En 2012, Beceanu \cite{bec} montra que les opérateurs d'onde sont bornés sous des conditions plus faibles sur $V$ et obtenit une formule de structure plus précise.

Soit $\ISO(3) = \{s \in \B(\R^3, \R^3) \mid s^* s = I\}$ le groupe des transformations linéaires orthogonales, c'est-à-dire des isométries (linéaires), sur $\R^3$.

On obtenit le résultat suivant:
\begin{theorem}\lb{theorem_1.1}
Soit $V \in \widehat {\dot B^{1/2}_{2, 1}}$ un potentiel à valeurs réelles tel que $H = -\Delta+V$ n'admet pas de fonction propre ou résonance à énergie zéro. Alors pour chacun d'entre $W_{\pm}$ et $W_{\pm}^*$ il existe $g_{s, y}(x) \in (\mc M_{loc})_{s, y, x}$ tel que $\|g_{y, s}(x)\|_{L^{\infty}_x} \in L^1_y \mc M_s$, c'est-à-dire
$$
\int_{\set R^3} \Big(\int_{\ISO(3)} \dd \|g_{s, y}\|_{L^{\infty}_x}\Big) \dd y < \infty
$$
et pour $f \in L^2$ on a la formule de répresentation
\be\begin{aligned}\lb{eqn1.8}
(W f)(x) &= f(x) + \int_{\R^3} \Big(\int_{\ISO(3)} \dd g_{s, y}(x) f(sx + y)\Big) \dd y.
\end{aligned}\ee
\end{theorem}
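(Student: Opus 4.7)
The plan is to express $W_+$ through a stationary representation and then to uncover the Euclidean-group structure of (\ref{eqn1.8}) by exploiting the explicit form of the free resolvent in $\R^3$; the cases $W_-$ and $W^*_\pm$ then follow by taking adjoints and complex conjugates. The starting point is the Lippmann--Schwinger formula
\[
W_+ = I - \int_0^\infty R_0(\lambda^2 + i0)\, V\, \bigl(I + R_0(\lambda^2+i0) V\bigr)^{-1}\, dE_{-\Delta}(\lambda^2),
\]
which follows from the intertwining relation together with the spectral theorem for $-\Delta$. This reduces the task to (a) inverting $I + R_0(\lambda^2+i0) V$ in a suitable algebra uniformly in $\lambda \geq 0$, and (b) reading off the resulting kernel in the form prescribed by (\ref{eqn1.8}).

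For step (a) I would work on a Wiener-type algebra $\mc W$ tailored to $\widehat{\dot B^{1/2}_{2,1}}$, on which $R_0(\lambda^2+i0) V$ is compact for each $\lambda > 0$. Uniform invertibility away from $\lambda = 0$ then follows from the Fredholm alternative plus a high-frequency perturbation argument of Agmon--Kato type. Near $\lambda = 0$, the non-resonance / non-eigenfunction hypothesis ensures that $I + R_0(0) V$ is injective on $\mc W$, and a threshold analysis yields a uniform bound with controlled modulus of continuity down to zero; this is where the scaling-invariance of $\widehat{\dot B^{1/2}_{2,1}}$ becomes essential.

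For step (b), expand the free resolvent kernel $e^{i\lambda|x-y|}/(4\pi|x-y|)$ into plane waves $e^{i\lambda \omega\cdot(x-y)}$ with $\omega \in S^2$ via a Herglotz-type identity. Substituted into the stationary formula, each plane-wave factor composed with a translation in the integration variable rewrites as $f(s_\omega x + y)$, where $s_\omega \in \ISO(3)$ is the reflection in the hyperplane perpendicular to $\omega$ and $y \in \R^3$ runs over translations coming from the support of $V$. The hypothesis $V \in \widehat{\dot B^{1/2}_{2,1}}$ then delivers, via Plancherel and $\ell^1$-summation over dyadic frequency shells, exactly the $L^1_y \mc M_s$ integrability of $\|g_{s,y}\|_{L^\infty_x}$ required in the statement. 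Combined with step (a) one obtains (\ref{eqn1.8}).

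The main obstacle is step (a), and specifically the threshold analysis at $\lambda = 0$: because $\widehat{\dot B^{1/2}_{2,1}}$ is scale-invariant, compactness of $R_0 V$ cannot be upgraded by a cheap compact perturbation, and the non-resonance hypothesis must be used quantitatively to exhibit a spectral gap of $I + R_0(\lambda^2+i0)V$ in $\mc W$ uniformly down to $\lambda = 0$. A secondary difficulty will be checking that the kernels produced by the plane-wave decomposition assemble into a locally finite measure in $(s,y)$ rather than a mere distribution; this is where the absence of derivative loss in the $L^\infty_x$ norm and the precise form of the Herglotz decomposition interact most delicately.
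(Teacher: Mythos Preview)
Your outline has the right ingredients but a real gap at the point where you write ``Combined with step (a) one obtains (\ref{eqn1.8}).'' Step (b) gives the Euclidean-group structure for the \emph{first} Born term $R_0(\lambda^2+i0)V$ via the plane-wave decomposition of the free resolvent --- this is exactly what the paper does explicitly with the functions $L_\pm$ and the estimate (\ref{estim}). But step (a), as you describe it, only yields invertibility of $I+R_0(\lambda^2+i0)V$ as a bounded operator on some unspecified space $\mc W$, uniformly in $\lambda$. That does not by itself imply that the inverse, or the full wave operator obtained after integrating in $\lambda$, inherits the structure (\ref{eqn1.8}): the Herglotz identity applies to $R_0$, not to $(I+R_0V)^{-1}$, and a uniform operator bound carries no information about how the kernel decomposes over $\ISO(3)\times\R^3$. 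You would need your algebra $\mc W$ to already encode the structure formula and to be closed under inversion --- but then you are back to constructing the paper's algebra, not bypassing it.

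The paper's route (recalled in the ``Rappel'' subsection) is designed precisely to close this gap. Rather than inverting pointwise in $\lambda$ and then hoping to decompose, it \emph{lifts} the first Born term to an element $T$ of an explicit Banach algebra $Y$ (see (\ref{2.31})) whose definition is rigged so that, after contraction via $\xi_0=0$, every element of $Y$ projects to an operator of the form (\ref{eqn1.8}) through the space $X$ of (\ref{2.9}). One checks $T\in Y$ (this uses (\ref{estim}) together with the stability of $\widehat{\dot B^{1/2}_{2,1}}$ under half-space cutoffs), observes that the $n$-th Born term is the projection of the algebra power $T^n$, and then obtains $(I+T)^{-1}\in Y$ either by summing the geometric series for small $V$ or by the abstract Wiener $1/f$-theorem in general; the zero-energy hypothesis enters as the spectral condition in that Wiener argument. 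The structure formula and the $L^1_y\mc M_s$ integrability then follow \emph{from algebra membership}, with no further decomposition needed. Your Fredholm/threshold analysis would at best reproduce the character computation inside the Wiener step, but cannot replace the lifting to $Y$.
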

En effet, les fonctions $g$ sont sont des combinations linéaires intégrables de fonctions caractéristiques de démi-éspaces, fait qui importe plutôt en dimension supérieure.

Par consequence, en dimension supérieure $d>3$ on retrouve la même formule de structure pour $V \in \widehat {\dot B^{\frac {d-2}{d-1}}_{\frac{d-1}{d-2}, 1}}$. Comme $\frac{d-1}{d-2}=2$ seulement pour $d=3$, le théorème de Plancherel ne s'applique plus quand $d>3$ et il faut se contenter avec une representation en fréquence.

Les opérateurs d'onde ont une expansion asymptotique par la formule de Duhamel. Le premier terme est l'identité, puis le suivant est
$$
W f := W_1 f := i \int_0^{\infty} e^{-it\Delta} V e^{it\Delta} f \dd t,
$$
puis plusieurs termes pareils:
$$\begin{aligned}
W_n f = i^n \int_{0\leq t_1 \leq \ldots \leq t_n} e^{i(t_n-t_{n-1})H_0} V \ldots e^{i(t_2 - t_1)H_0} V e^{it_1 H_0} V e^{-it_nH_0} f \dd t_1 \ldots \dd t_n.
\end{aligned}$$
La série avec ces terms converge en norme pour $V$ petit.

\begin{observation} Ecrit comme opérateur pesudodifférentiel en fréquence, le deuxième terme a le noyau intégral
$$
\widehat W = \frac {\widehat V(\xi_1-\xi_2)}{|\xi_1|^2-|\xi_2|^2}.
$$
Combien de décroissance peut-on gagner? En regardant ailleurs que la diagonale, par exemple dans la région où $|\xi_1|>2|\xi_2|$, l'opérateur se réduit~à
$$
\chi_{|\xi_1|>2|\xi_2|} \widehat W \sim \chi_{|\xi_1|>2|\xi_2|} \frac {\widehat V(\xi_1-\xi_2)}{|\xi_1|^a |\xi_2|^{2-a}} = \chi_{|\xi_1|>2|\xi_2|} |\dl|^{-a} V |\dl|^{a-2}.
$$
Même dans ce cas optimal, on gagne une puissance de décroissance tout au plus: $L^{3, 1} \mapsto L^{3/2, \infty}$.

On peut aussi examiner l'équation de la chaleur:
$$
\partial_t \psi(x, t) + \Delta \psi(x, t) - V(x, t) \psi(x, t) = \Psi(x, t),\ \psi(0)=\psi_0.
$$
Celle-ci est plus facile à étudier et a des propriétés similaires. On utilise le fait que $W \in \B(\dot H^{1/2}, \dot H^{-1/2})$, de même que $W_\pm$ et leurs adjoints, si $V \in L^1$. L'inégalité de Morawetz est importante en ce cas-limite.

Au vu du gain de régularité dans
$$
\|\int_{t>s} e^{(t-s)\Delta} F(s) \dd s\|_{L^4_t \dot H^{1/2}_x} \les \|F\|_{L^{4/3}_t \dot H^{-1/2}_x},
$$
ça vaut une puissance de décroissance en $x$ pour l'équation de la chaleur. C'est-à-dire qu'on peut prendre $\partial_t V \in L^2_t L^1_x$, un peu mieux que pour l'équation de Schr\"{o}dinger.

En revanche, comme l'évolution n'est plus unitaire, il faudrait procéder différemment pour controller les états stationnaires.
\end{observation}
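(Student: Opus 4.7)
The observation is a heuristic discussion, not a sharply stated theorem: its content is a quantitative ceiling of one power of decay for the second Dyson term $W_1$, plus an analogous but slightly better ceiling for the heat equation. To back it up rigorously I would break the task into three steps, all carried out by explicit symbol calculus, and then transplant the argument to the heat case.

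\textbf{Step 1: symbol of $W_1$.} Starting from $W_1 f = i\int_0^\infty e^{-it\Delta} V e^{it\Delta} f\,dt$, I would conjugate by the Fourier transform so that the free propagators become multiplications by $e^{\mp it|\xi|^2}$ and multiplication by $V$ becomes convolution by $\widehat V$. The remaining time integral $\int_0^\infty e^{it(|\xi_1|^2-|\xi_2|^2)}\,dt$, interpreted as a $+i0$ limit, yields the principal-value symbol $\widehat V(\xi_1-\xi_2)/(|\xi_1|^2-|\xi_2|^2)$ quoted in the remark together with an on-shell delta that lives on the diagonal and does not affect the off-diagonal analysis.

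\textbf{Step 2: off-diagonal factorization.} On the sector $|\xi_1|>2|\xi_2|$ we have $|\xi_1|^2-|\xi_2|^2\sim |\xi_1|^2$, and for any $0<a<2$,
$$
|\xi_1|^a|\xi_2|^{2-a}\les |\xi_1|^a|\xi_1|^{2-a}=|\xi_1|^2,
$$
so the symbol is pointwise dominated by $\widehat V(\xi_1-\xi_2)/(|\xi_1|^a|\xi_2|^{2-a})$, which is exactly the frequency kernel of $|\dl|^{-a}V|\dl|^{a-2}$ (apply $|\dl|^{a-2}$ to the input, convolve by $\widehat V$, apply $|\dl|^{-a}$ to the output). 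The endpoint gain $L^{3, 1}\to L^{3/2, \infty}$ then follows from composing two Hardy--Littlewood--Sobolev estimates with Lorentz endpoints around multiplication by $V\in\widehat{\dot B^{1/2}_{2, 1}}\subset \langle x\rangle^{-1/2-\epsilon}L^2$ and optimizing in $a\in(0, 2)$; the two Riesz factors together smooth by two orders, which is the one power of decay being claimed.

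\textbf{Step 3: sharpness and the heat analog.} The main obstacle --- and the actual content of the remark --- is showing that one cannot do better than a single power. This comes from the complementary region $|\xi_1|\sim|\xi_2|$, where $1/(|\xi_1|^2-|\xi_2|^2)$ is singular and no choice of $a$ makes $|\xi_1|^a|\xi_2|^{2-a}$ beat the denominator. I would establish the ceiling by testing $W_1$ against rescaled wavepackets concentrated at $|\xi_1|\approx |\xi_2|\approx N$ and checking that the operator norm scales like $N^0$, ruling out any further smoothing. For the heat variant, the analogous denominator is $|\xi_1|^2+|\xi_2|^2$, which is nonsingular on the diagonal; the extra half-derivative smoothing is captured by the parabolic maximal-regularity estimate $L^{4/3}_t\dot H^{-1/2}_x\to L^4_t\dot H^{1/2}_x$ cited in the remark, provable via the Morawetz identity for $e^{t\Delta}$, and the improved threshold $\partial_t V\in L^2_t L^1_x$ then falls out by trading this extra smoothing against one less power of time integrability in the bootstrap of the previous subsection.
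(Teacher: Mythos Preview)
The paper gives no separate proof of this \emph{Remarque}; the justification is the inline heuristic itself. Your Steps~1 and~2 are a faithful formalization of the first half: the symbol calculation is exactly how one arrives at $\widehat V(\xi_1-\xi_2)/(|\xi_1|^2-|\xi_2|^2)$, and the off-diagonal domination by $|\nabla|^{-a}V|\nabla|^{a-2}$ is precisely what the text writes down. Your wavepacket test for sharpness in Step~3 is a sensible addition that the paper does not carry out; it merely asserts that two total derivatives in the denominator cap the gain at one power of $|x|^{-1}$ in $\R^3$.

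Your reading of the heat-equation paragraph, however, diverges from what the remark actually says. You posit an ``analogous denominator $|\xi_1|^2+|\xi_2|^2$'' as though the paper were computing a separate heat-flow wave operator. It is not: there is no intertwining object of the form $\int_0^\infty e^{t\Delta}V e^{t\Delta}\,dt$ in play. The remark keeps the \emph{same} Schr\"odinger $W$ and invokes the mapping $W\in\B(\dot H^{1/2},\dot H^{-1/2})$, valid once $V\in L^1$; the Morawetz inequality is cited for \emph{that} endpoint bound (it enters through the $L^2_t$ control of $e^{it\Delta}f$ against $|x|^{-1}$-type weights in the duality pairing), not for the parabolic estimate. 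The parabolic smoothing $\|\int_{t>s}e^{(t-s)\Delta}F(s)\,ds\|_{L^4_t\dot H^{1/2}_x}\lesssim\|F\|_{L^{4/3}_t\dot H^{-1/2}_x}$ is ordinary heat-semigroup regularity, and its one-derivative gain is what compensates the one-derivative loss of $W$, closing the loop in $L^2_t$ and producing the stated threshold $\partial_t V\in L^2_t L^1_x$. So your conclusion is right but the mechanism you describe --- a nonsingular diagonal for a heat wave operator --- is not the one the remark has in mind, and your attribution of Morawetz to the parabolic estimate should be moved to the $\dot H^{1/2}\to\dot H^{-1/2}$ bound on $W$.
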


Retournant à la demonstration du résultat principal, une formule alternative qu'on utilisera pour le deuxième terme est
$$\begin{aligned}
W f(x) = \int_{S^2} \int_{[0, \infty)} &K(x, t \omega) f(x+t \omega) \dd t \dd \omega,
\end{aligned}$$
où $K$ s'écrit dans des coordonnées polaires comme
$$\begin{aligned}
K(x, t\omega) = \frac 12\int_{[0, \infty)} \widehat {V}(s \omega) e^{-it s/2} e^{i s \omega \cdot x} s \dd s.
\end{aligned}$$
Soit
$$
L_\pm(t, \omega) = \int_{[0, \pm\infty)} \widehat {V}(s \omega) e^{-it s} s \dd s.
$$
Ainsi, $x \mapsto x - 2(x \cdot \omega) \omega$ étant une isométrie, on écrit $K$ comme
$$\begin{aligned}
K(x, t \omega) &= \frac 12\chi_{(-\infty, \frac {t} 2)}(x \cdot \omega)\ L_+((t-2x\cdot \omega) \omega) +\\
&+ \frac 12\chi_{(\frac {t} 2, \infty)}(x \cdot \omega)\ L_-((t - 2x \cdot \omega)\omega).
\end{aligned}$$
Cette représentation est conforme à la formule de représentation (\ref{eqn1.8}), pourvu que $L_\pm$ est intégrable:
\be\lb{Lpm}
\int_{S^2} \int_{\R} |L_\pm(t, \omega)| \dd t \dd \omega < \infty.
\ee
C'est bien le cas, au moins si $V \in \widehat{\dot B^{1/2}_{2, 1}}$:
\be\lb{estim}
\int_{S^2} \int_{\R} |L_\pm(t, \omega)| \dd t \dd \omega \les \|V\|_{\widehat{\dot B^{1/2}_{2, 1}}}.
\ee
En effet, on a que $L_\pm \in L^2_\omega$ sans condition supplémentaire.

\subsection{Rappel} Récapitulant Beceanu \cite{bec}, on voudrait maintenant écrire les autres termes comme puissances du premier terme, de sorte que leurs propriétés suivent à celles du premier terme. En plus, la série serait ainsi une série géométrique, qu'on saurait additionner.

Ceci est bien le cas, mais il faut lifter ces opérateurs pseudo-différentiels pour voir la structure d'algèbre de Banach. La loi d'algèbre est
$$
(T_1 \ast T_2)(x_0, x_2, y) = \int_{\set R^6} T_1(x_0, x_1, y_1) T_2(x_1, x_2, y-y_1) \dd x_1 \dd y_1.
$$
Cette algèbre, qu'on appelle $Y$, est définie comme
\be\lb{2.31}\begin{aligned}
Y &:= \{T(x_0, x_1, y) \in \mc S' \mid \forall s \in \ISO(3)\ T(x_0, x_1, y + x_0-s x_0) \in \mc S', \\
&\forall g \in L^{\infty}\ \forall s \in \ISO(3)\ \int_{\R^3} g(x_0) T(x_0, x, y+x_0-s x_0) \dd x_0 \in X\}.
\end{aligned}\ee
L'espace $X$ est l'espace de noyaux intégraux à deux variables conformes à la formule de structure (\ref{eqn1.8}):
\be\begin{aligned}\lb{2.9}
X := &\{\frak X \in \B(L^{\infty}, L^{\infty})\mid (\frak X f)(x) = \int_{\R^3} \frak X(x, y) f(x-y)  \dd y, \\
&\frak X(x, y) = \int_{\ISO(3)} g_{s, y + x - s x}(x) \dd s, \\
&\int_{\R^3} \int_{\ISO(3)} \dd \|g_{s, y}\|_{L^{\infty}_x} < \infty\},
\end{aligned}\ee

Le lifting du premier terme
$$
\widehat W = \frac {\widehat V(\xi)}{|\xi+\eta|^2-|\eta|^2}
$$
vers l'algèbre $Y$ est donné par
$$
\widehat T(\xi_0, \xi_1, \eta) = \frac {\widehat V(\xi_1-\xi_0)}{|\xi_1+\eta|^2-|\eta|^2}
$$
et l'opération inverse (de projection) est de remettre $\xi_0=0$.

Alors chaque terme $W_n$ est la projection de la puissance $T^n$ de $T$ prise dans cette algèbre $Y$: ça revient à
$$\begin{aligned}
(\mc F_{x_0, x_n, y} \widehat T^n(\xi_0, \xi_n, \eta) &:= 
\int_{\R^{3(n-1)}} \frac{\prod_{\ell=1}^n \widehat V(\xi_{\ell} - \xi_{\ell-1}) \dd \xi_1 \ldots \dd \xi_{n-1}}{\prod_{\ell=1}^n (|\xi_{\ell}+\eta|^2-|\eta|^2 - i 0)}
\end{aligned}$$
et puis on projette en prenant $\xi_0=0$ pour retrouver $W_n$.
 
D'abord on montre que $\widehat T \in Y$, ce qui revient à verifier (\ref{Lpm}) pour $V$ tronqué par la fonction caractéristique d'un démi-espace quelconque, c'est-à-dire avec $\chi_{\{x \cdot \omega_0 \geq t_0\}}(x) V(x)$ remplaçant $V$. C'est vrai parce que
$$
\|\chi_{\{x \cdot \omega_0 \geq t_0\}}(x) V(x)\|_{\widehat {B^{1/2}_{2, 1}}} \leq \|V\|_{\widehat {B^{1/2}_{2, 1}}}.
$$

Si $V$ est petit la série converge, sinon on obtient $(I+T)^{-1} \in Y$ par le théorème de Wiener ;  voir Beceanu \cite{bec} pour ces calculs.

\subsection{Suite} En tout cas, maintenant prendre la dérivée comme fonction du temps revient à estimer
$$
\partial_t (I+T)^{-1} = -(I+T)^{-1} \partial_t T (I+T)^{-1}
$$
et puis projeter en prenant $\xi_0=0$.

Tout d'abord il faut comprendre $\partial_t T$. On commence par
$$
\partial_t K(x, t\omega) = \frac 12\int_{[0, \infty)} \widehat {V_t}(s \omega) e^{-it s/2} e^{i s \omega \cdot x} s \dd s.
$$
Pour obtenir une décroissance en $x$, il faut intégrer par parts en $s$. En ce faisant on gagne $\ds\frac 1{|x\cdot \omega|}$, mais en perdant $\frac 1 s$, $t$, and $\partial_s$: pour $V_t \in \mc S$
\be\begin{aligned}\lb{int_part}
\partial_t K(x, t\omega) = -\frac 1{2 x \cdot \omega} \int_{[0, \infty)} &\big(\partial_s \widehat {V_t}(s \omega) e^{-it s/2} s - it \widehat {V_t}(s \omega) e^{-it s/2} s + \\
&+ \widehat {V_t}(s \omega) e^{-it s/2} \big) e^{is \omega \cdot x} \dd s.
\end{aligned}\ee
Les premiers deux termes sont pareils à (\ref{estim}), mais avec $V$ substitué par $\partial_s \widehat {V_t}(s \omega) + \widehat {V_t}(s \omega)$, donc ce sont contrôlés par
$$
\|\partial_s \widehat {V_t}(s \omega) + \widehat {V_t}(s \omega)\|_{\dot B^{1/2}_{2, 1}}.
$$
L'inégalité de Hardy assure que
$$
\|\frac {\widehat V} s\|_{\dot B^{1/2}_{2, 1}} \les \|\partial_s \widehat V\|_{\dot B^{1/2}_{2, 1}},
$$
donc cette dernière expression est une borne pour ces termes.

Pour le troisième terme $\widehat {V_t}(s \omega)$ il faudrait gagner un facteur de $t^{-1}$, mais c'est impossible: l'intégration par parts rend $t^{-2}$, qui n'est pas intégrable après la multiplication par $t$.

On retrouve donc le résultat moins un epsilon et c'est pourquoi il ne tient pas dans le cas-limite.

\begin{lemma} Pour $0<\epsilon\leq 1$
$$
\||x \cdot \omega|^{1-\epsilon} \int_{[0, \infty)} \widehat {V}(s \omega) e^{-it s/2} e^{i s \omega \cdot x} s \dd s\|_{L^\infty_x L^1_{t, \omega}} \les \|\widehat V\|_{\dot B^{3/2-\epsilon}_{2, 1}}.
$$
\end{lemma}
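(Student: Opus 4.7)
The strategy is to realize the oscillatory integral as a one-dimensional inverse Fourier transform in $s$, and then use fractional integration by parts to transfer the weight $|x\cdot\omega|^{1-\epsilon}$ onto a fractional derivative of $\widehat V$. The $1-\epsilon$ derivatives so absorbed shift the Besov index from $1/2$ (as in (\ref{estim})) to $3/2-\epsilon$.

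Introduce the profile $\phi_\omega(s) := \widehat V(s\omega)\, s\, \chi_{s\geq 0}$ and combine the phases to $e^{is(\omega\cdot x - t/2)}$: up to constants, the integral in the statement is the Fourier transform of $\phi_\omega$ in $s$ evaluated at $r := t/2 - \omega\cdot x$, so the integrand depends on $x$ only through the translated variable $r$. By the fractional triangle inequality,
$$
|x\cdot\omega|^{1-\epsilon} \les |r|^{1-\epsilon} + |t/2|^{1-\epsilon},
$$
and it suffices to control the two resulting terms separately.

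For the $|r|^{1-\epsilon}$ piece, the change of variables $t \mapsto 2r + 2\omega\cdot x$ removes the $x$-dependence, reducing matters to $\int\int |r|^{1-\epsilon}\,|(\phi_\omega)^{\wedge}(r)|\, \dd r\, \dd\omega$. Fourier duality converts the weight into the fractional derivative $|D_s|^{1-\epsilon}$ acting on $\phi_\omega$; commuting $|D_s|^{1-\epsilon}$ past the truncated multiplier $s\chi_{s\geq 0}$ yields the principal term $s\,|D_s|^{1-\epsilon}\widehat V(s\omega)$ plus a lower-order remainder, controlled by Hardy's inequality at $s = 0$ just as in the text below (\ref{int_part}). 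The chain that proves (\ref{estim}) --- Plancherel in $s$, polar coordinates in $\R^3$, and Littlewood--Paley summation --- applied to this modified profile shifts the dyadic weight from $2^{k/2}$ to $2^{k(3/2-\epsilon)}$, giving exactly $\|\widehat V\|_{\dot B^{3/2-\epsilon}_{2,1}}$. For the residual $|t|^{1-\epsilon}$ piece, one integration by parts in $s$ via $e^{-its/2} = (2i/t)\partial_s e^{-its/2}$ reduces the factor to $|t|^{-\epsilon}$ (integrable because $\epsilon > 0$) times derivatives on $\widehat V(s\omega)\,s$, bringing it into the scope of the main estimate with one fewer fractional derivative to spare. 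The $L^\infty_x$ uniformity then follows because $x$ appears in the reduced integrand only through the translation $r \mapsto r + \omega\cdot x$, to which the $L^1_r$ norm is insensitive.

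The principal technical obstacle is a clean treatment of the commutator $[|D_s|^{1-\epsilon}, s\chi_{s\geq 0}]$ at the Besov level: this requires a Littlewood--Paley decomposition in the $s$-variable, with Hardy absorbing the endpoint at $s=0$, paralleling the treatment of the $1/s$ factor in (\ref{int_part}). The argument breaks down precisely at $\epsilon = 0$, since the residual factor $|t|^{-\epsilon}$ is then no longer integrable --- consistent with the observation in the preceding discussion that the main theorem holds only ``moins un epsilon'' in this limiting case.
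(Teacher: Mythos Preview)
Your approach differs substantially from the paper's. Rather than splitting the weight and invoking fractional derivatives, the paper proves two endpoint bounds --- a per-dyadic-block estimate in $t$ at $\epsilon=0$ obtained from the full integration by parts (\ref{int_part}), and the known bound (\ref{estim}) at $\epsilon=1$ --- and then reaches the intermediate range $0<\epsilon<1$ by real interpolation on the Besov scale $\dot B^{3/2-\epsilon}_{2,1}$. No fractional commutator is ever needed.

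Your direct route has a genuine gap in the $|t|^{1-\epsilon}$ piece. When you integrate by parts via $e^{-its/2}=(2i/t)\,\partial_s e^{-its/2}$, the derivative $\partial_s$ acts on the \emph{entire} remaining integrand $\widehat V(s\omega)\,s\,e^{is\omega\cdot x}$, not only on $\widehat V(s\omega)\,s$ as you write. The term coming from $\partial_s e^{is\omega\cdot x}=i(\omega\cdot x)\,e^{is\omega\cdot x}$ reproduces the original integral multiplied by $(\omega\cdot x)/t$, so the troublesome weight $|x\cdot\omega|$ reappears and the scheme does not close. Moreover, the assertion that $|t|^{-\epsilon}$ is ``integrable because $\epsilon>0$'' is false for $0<\epsilon<1$: $|t|^{-\epsilon}\notin L^1(\R)$, so even ignoring the missing $\omega\cdot x$ term you cannot simply multiply an $L^1_t$ quantity by $|t|^{-\epsilon}$ and remain in $L^1_t$. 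The paper sidesteps both difficulties by integrating by parts against $e^{is\omega\cdot x}$ instead --- gaining $1/(x\cdot\omega)$ directly, at the price of an extra factor of $t$ that is harmless on a fixed dyadic $t$-shell --- and then interpolating, rather than working at a single fractional order.
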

\begin{proof} On montre d'abord que pour chaque $k \in \Z$ et $x \in \R^3$
\be\lb{7}
\|\chi_{2^k \leq |t| \leq 2^{k+1}}(t) \int_{[0, \infty)} \widehat {V}(s \omega) e^{-it s/2} e^{i s \omega \cdot x} s \dd s\|_{L^1_{t, \omega}} \les |x \cdot \omega|^{-1}  \|\widehat V\|_{\dot B^{3/2}_{2, 1}}
\ee
(donc il y a une perte logarithmique pour $\epsilon=0$), aussi que
\be\lb{8}
\sum_{k \in \Z} \|\chi_{2^k \leq |t| \leq 2^{k+1}}(t) \int_{[0, \infty)} \widehat {V}(s \omega) e^{-it s/2} e^{i s \omega \cdot x} s \dd s\|_{L^1_{t, \omega}} \les \|\widehat V\|_{\dot B^{1/2}_{2, 1}}.
\ee

Pour montrer (\ref{7}), il suffit de supposer que $V(x)$ est supporté dans une région compacte comme $\{x \mid 2^k \leq |x| \leq 2^{k+1}\}$ --- et de prendre $k=1$ sans perte de généralité. Alors $\widehat V$ est analytique et en particulier est en $H^2$ ;  la conclusion suit par (\ref{int_part}).

Enfin, (\ref{8}) est ce qu'on savait déjà par (\ref{estim}) et correspond à $\epsilon=1$.

Alors, pour $x \in \R^3$ et $\epsilon \in (0, 1)$ on obtient par interpolation réelle que
$$
\sum_k \|\chi_{2^k \leq |t| \leq 2^{k+1}}(t) \int_{[0, \infty)} \widehat {V}(s \omega) e^{-it s/2} e^{i s \omega \cdot x} s \dd s\|_{L^1_{t, \omega}} \les |x \cdot \omega|^{-1+\epsilon} \|V\|_{\dot B^{3/2-\epsilon}_{2, 1}}.
$$
\end{proof}

Il faut alors utiliser cette décroissance, qui tient dans une seule direction à la fois. Une possibilité est de prendre la moyenne
$$
\int_{S^2} \Big\|\frac {f(x)}{|x\cdot \omega|^{1-\epsilon}}\Big\|_{L^1_x} \dd \omega \les \|f\|_{L^{3/2-\epsilon, 1}_x}.
$$
Il n'y a aucun gain quand $f \in L^\infty$, mais par interpolation on en obtient un pour $f \in L^p$, $p<\infty$. Celui-ci n'est optimal que pour $p=1$, donc
on en perd de cette façon.

Pour retrouver le gain optimal, une approche est d'utiliser des inégalités de Strichartz retardées hétérogènes.

\begin{lemma}
Pour toutes les directions $\omega_1$ et $\omega_2 \in S^2$, en décomposant $\R^3 = \R_{\omega_1} \oplus \R_{\omega_1}^\perp = \R_{\omega_2} \oplus \R_{\omega_2}^\perp$, on a
$$
\|\int_{t>s} e^{-i(t-s)\Delta} F(s) \dd s\|_{L^4_t L^\infty_{\omega_2} L^2_{\omega_2^\perp}} \les \|F\|_{L^{4/3}_t L^1_{\omega_1} L^2_{\omega_1^\perp}}.
$$
\end{lemma}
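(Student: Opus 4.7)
The plan is to reduce the statement, via a $TT^*$ argument and the Christ--Kiselev lemma, to a uniform-in-$\omega$ endpoint Strichartz estimate that can itself be derived from the classical $1$D endpoint $(4,\infty)$ by fibering the frequency space.

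First I would establish, for every $\omega \in S^2$ with a constant independent of $\omega$, the homogeneous bound
$$
\|e^{-it\Delta} f\|_{L^4_t L^\infty_\omega L^2_{\omega^\perp}} \les \|f\|_{L^2_x}.
$$
Decomposing $\xi = \sigma \omega + \xi^\perp$ with $\xi^\perp \in \omega^\perp$ and applying Plancherel on the plane $\omega^\perp$ recasts $\|e^{-it\Delta}f(r\omega+\cdot)\|_{L^2_{\omega^\perp}}^2$ as a constant times $\int_{\omega^\perp} |v(r,t;\xi^\perp)|^2 \dd \xi^\perp$, where $v(\cdot,t;\xi^\perp)$ is the $1$D Schr\"odinger evolution in $r$ of the datum $\mathcal{F}^{-1}_\sigma[\hat f(\sigma\omega + \xi^\perp)]$ (the unimodular phase $e^{-it|\xi^\perp|^2}$ drops out). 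Two applications of Minkowski, both valid because $4 \geq 2$ and $\infty \geq 2$, pull the $L^4_t$ and $L^\infty_r$ norms inside the $L^2_{\xi^\perp}$ integration; one is then reduced to the $1$D endpoint Strichartz estimate $\|e^{it\partial_r^2}\phi\|_{L^4_t L^\infty_r} \les \|\phi\|_{L^2_r}$ (admissible in dimension one since $\tfrac{2}{4}+\tfrac{1}{\infty}=\tfrac{1}{2}$), and Plancherel in $\sigma$ and $\xi^\perp$ recombines the fibers into $\|f\|_{L^2}^2$.

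Writing $T_\omega f := e^{-it\Delta}f$ viewed as a map $L^2_x \to L^4_t L^\infty_\omega L^2_{\omega^\perp}$, the preceding estimate applied at $\omega = \omega_2$ makes $T_{\omega_2}$ bounded, while the dual statement applied at $\omega = \omega_1$ makes $T_{\omega_1}^* : L^{4/3}_t L^1_{\omega_1} L^2_{\omega_1^\perp} \to L^2_x$ bounded. Their composition realizes
$$
T_{\omega_2} T_{\omega_1}^* F(t) = \int_\R e^{-i(t-s)\Delta} F(s) \dd s
$$
and is bounded from $L^{4/3}_t L^1_{\omega_1} L^2_{\omega_1^\perp}$ into $L^4_t L^\infty_{\omega_2} L^2_{\omega_2^\perp}$, with $\omega_1$ and $\omega_2$ arbitrary. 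Since $4/3 < 4$, the Christ--Kiselev lemma upgrades this non-retarded bound to the retarded integration $\int_{t>s}$ without loss of constant, yielding the lemma.

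The main difficulty is concentrated in the uniform-in-$\omega$ endpoint estimate of the first step; the freedom to choose $\omega_1 \neq \omega_2$ is then automatic from $TT^*$. The key observation is that slicing frequency space by the planes $\omega^\perp$ converts the $3$D Schr\"odinger flow restricted to hyperplanes into a fibered family of $1$D evolutions, and the anisotropic target $L^4_t L^\infty_\omega L^2_{\omega^\perp}$ is designed precisely so that both Minkowski exchanges run in the favorable direction and land on the admissible $1$D Strichartz pair $(4,\infty)$.
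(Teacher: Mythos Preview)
Your proof is correct and follows essentially the same architecture as the paper's: establish the homogeneous anisotropic bound $\|e^{-it\Delta}f\|_{L^4_t L^\infty_\omega L^2_{\omega^\perp}}\lesssim\|f\|_{L^2}$, compose $T_{\omega_2}$ with $T_{\omega_1}^*$ to obtain the non-retarded mixed-direction inequality, and then invoke Christ--Kiselev (legitimate since $4/3<4$) to pass to the retarded operator. The only difference is cosmetic: the paper first derives the diagonal retarded estimate from the one-dimensional dispersive decay via fractional integration and then reads off the homogeneous bound by $TT^*$, whereas you reach the homogeneous bound directly by fibering onto the one-dimensional endpoint pair $(4,\infty)$ and using Minkowski; these two derivations are equivalent.
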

\begin{proof}
C'est facile, d'abord pour $\omega_1=\omega_2$, d'obtenir (par la méthode habituelle de l'intégration fractionnelle) que
$$
\bigg\|\int_{t>s} e^{-i(t-s)\Delta} F(s) \dd s\bigg\|_{L^4_t L^\infty_{\omega_1} L^2_{\omega_1^\perp}} \les \|F\|_{L^{4/3}_t L^1_{\omega_1} L^2_{\omega_1^\perp}}
$$
et de même pour $t<s$. Alors, par la méthode $T T^*$ on obtient que
$$
\|e^{-it\Delta} f\|_{L^4_t L^\infty_{\omega_1} L^2_{\omega_1^\perp}} \les \|f\|_{L^2_x}.
$$
En passant par $L^2$ de cette façon on obtient que
$$
\bigg\|\int_{\R} e^{-i(t-s)\Delta} F(s) \dd s\bigg\|_{L^4_t L^\infty_{\omega_2} L^2_{\omega_2^\perp}} \les \|F\|_{L^{4/3}_t L^1_{\omega_1} L^2_{\omega_1^\perp}}.
$$
Puis on utilise le lemma de Christ--Kiselev pour obtenir l'inégalité retardée qu'on désirait:
\begin{lemma}[\cite{chki}]
Pour deux espaces de Banach $X$ and $Y$, soit l'opérateur borné $T:L^p(X) \to L^q(Y)$, donné par son noyau intégral
$$
(T F)(t) = \int_\R T(t, s) F(s) \dd s.
$$
Si $1 \leq p < q < \infty$, alors l'opérateur $\tilde T$ de noyau $\chi_{t>s} T(t, s)$ est aussi borné de $L^p(X)$ à $L^q(Y)$.
\end{lemma}
On peut utiliser ce lemma parce qu'on est loin du cas-limite, c'est-à-dire $4/3<4$.
\end{proof}

\end{document}